\newtheorem{theorem}{Theorem}
\newtheorem{prop}{Proposition}
\theoremstyle{definition}
\newtheorem{defin}{Definition}
\newtheorem{case}{Case}
\newtheorem{subcs}{\hspace*{1em}}
\theoremstyle{remark}
\newtheorem*{erem}{Remark}
\def\al{\alpha}	\def\be{\beta}	
\def\ga{\gamma}	\def\la{\lambda}
\def\fK{\mathbf k}	\def\gM{\mathfrak m}
\def\fV{\mathbf v}	\def\rB{\mathrm B}
\def\sb{\subset}	\def\sbe{\subseteq}
\def\sp{\supset}	\def\spe{\supseteq}
\def\set#1{\left\{\,#1\,\right\}}
\def\setsuch#1#2{\left\{\,#1\mid #2\,\right\}}
\def\gnr#1{\langle\,#1\,\rangle}
\def\lst#1#2{ #1_1 , #1_2 , \dots , #1_{#2} }
\def\pa{\mathop\mathrm{par}\nolimits}
\def\End{\mathop\mathrm{End}\nolimits}
\def\rad{\mathop\mathrm{rad}\nolimits}
\def\chr{\mathop\mathrm{char}\nolimits}
\def\Gr{\mathop\mathrm{Gr}\nolimits}
\def\hS{\bar S}	\def\hE{\bar E}	\def\hW{\bar W}
\def\tI{\tilde I}
\def\dm{\mbox{-}}
\def\iff{if and only if }
\title[Ideals of one branch curve singularities]{One branch curve singularities with at 
most 2-parameter families of ideals}
\author{Yuriy A. Drozd}
\address{Institute of Mathematics, National Academy of Sciences of Ukraine}
\email{y.a.drozd@gmail.com, drozd@imath.kiev.ua}
\urladdr{www.imath.kiev.ua/$\sim$drozd}
\author{Ruslan V. Skuratovskii}
\address{Kyiv National Taras Shevchenko University}
\email{ruslcomp@mail.ru}
  \subjclass[2010]{Primary 13C05, Secondary 14H20}
\keywords{Curve singularity, ideal, family of ideals, sandwich technique}
\begin{document}

\begin{abstract}
 A criterion is given in order that the ideals of a one branch curve 
singularity form at most $2$-parameter families. Namely, we present a list of plane curve 
singularities from the Arnold's classification which are the smallest among all one 
branch singularities having at most $2$-parameter families of ideals.
\end{abstract}

\maketitle

\section*{Introduction}

 Ideals of commutative rings have been studied at least since the works of Dedekind on 
the ideals of algebraic numbers. The Dedekind domains, i.e. integrally closed noetherian 
domains of Krull dimension $1$, are just domains such that all their ideals are 
invertible. If a domain $A$ is not integrally closed, the theory of 
ideals becomes rather complicated. As it was noticed by Bass \cite{ba0} and, 
independently, by Borevich and Faddeev \cite{bf}, if $A$ is of Krull dimension $1$ and 
its integral closure $R$ has $2$ generators as $A$-module, every ideal is invertible over 
its multiplication ring (and vice versa). Moreover, in this case all finitely generated 
torsion free $A$-modules are direct sums of ideals. Jacobinski \cite{ja} and, 
independently, Drozd and Roiter \cite{dr} gave criteria for a commutative ring of Krull 
dimension $1$ to have finitely many nonisomorphic torsion free modules. It so happens  
that it is also the case when it has finitely many ideal classes. As Greuel and 
Kn\"orrer showed, in the local case these are just the rings dominating the \emph{simple 
plane curve singularities} $A\dm D\dm E$ of Arnold \cite{avg}. Schappert \cite{sch}, 
Drozd and Greuel \cite{dg} proved that a local ring of plane curve singularity only has 
$1$-parameter families of ideals \iff this singularity dominates a \emph{strictly 
unimodal} plane curve singularity (in \cite{avg} they are called \emph{unimodal} and 
\emph{bimodal}). This time it is no more the case that torsion free modules behave in the 
same manner. Among strictly unimodal plane curve singularities only those of type 
$T_{pq}$ are \emph{tame}, i.e. only have $1$-parameter families of indecomposable torsion 
free modules \cite{dg1}. All others are \emph{wild}, so have $n$-parameter families of 
nonisomorphic indecomposable torsion free modules for arbitrary $n$.

 In this paper we find a criterion for a one branch curve singularity to have at most 
$2$-parameter families of ideals. It so happens that such singularities can also be 
characterized using the Arnold lists from \cite[Section 15.1]{avg}. Namely, they are just 
those dominating one of the singularities of type 
$ E_{30},\, E_{32},\, W_{24},\, W^\sharp_{2,*},\, W_{30},\, N_{20},\, N_{24}$ or $N_{28}$ 
(see Theorem \ref{t1}). To prove this result we use the ``sandwich'' technique, just as 
in the papers cited above. Certainly, the ``one branch'' condition is rather restrictive 
and one would like to get rid of it, but even in this case the calculations are 
cumbersome, so we had to restrict our ambition.

\section{Main Theorem}

 We fix an algebraically closed field $\fK$.

\begin{defin}\label{d0}
  A \emph{one branch curve singularity} is a complete local noetherian $\fK$-algebra $S$ of Krull dimension $1$ without zero divisors and such that $S/\gM=\fK$, where $\gM$ is the maximal ideal of $S$. It is called \emph{plane} if $\gM$ is generated by $2$ elements. 
\end{defin}

 Such an algebra is indeed isomorphic to the completion of a local ring of a (singular) point $p$ of an algebraic curve $X$ over $\fK$; this curve can be chosen plane if so is the singularity. Moreover, the curve $X$ is irreducible in the formal neighbourhood of the point $p$, or, the same, $p$ belongs to a unique branch (place, formal component) of $X$ in the sense of 
\cite{gri} or \cite{wa}.

 It is known that the \emph{normalization} of a one branch curve singularity $S$ is 
isomorphic to the algebra $R=\fK[[t]]$ of formal power series and $R$ is finitely generated as $S$-module. So we always suppose that $t^rR\sb S\sb R$ for some $r$. For every element $x\in R$ let $v(x)$ be its \emph{valuation}, i.e. $x=t^{v(x)}u$, where $u$ is invertible in $R$. If $S$ is plane and $\gM=(x,y)$, one can always suppose that $v(x)<v(y)$ and $v(x)\nmid v(y)$. Then we call the pair $\fV=(v_1,v_2)$ the 
\emph{valuation vector} of $S$. Obviously, it does not depend on the choice of such generators. Note that every plane curve singularity is Gorenstein \cite{ba}. 

 We recall the definition of the \emph{parameter number of ideals} $\pa(S)=\pa(1,S)$ from \cite{dg0,dg}. Remark first that every ideal of $S$ is isomorphic to an $S$-submodule $M\sbe R$ containing $S$ \cite{di}. Let $\rB(d)$ be the closed subset of the Grassmannian $\Gr(d,R/S)$ consisting of those spaces which are $S$-submodules. Every point $b\in\rB(d)$ gives rise 
to an $S$-submodule $M(b)$ of $R$ which contains $S$. We set
\begin{align*}
   O(b)&=\setsuch{b'\in\rB(d)}{M(b')\simeq M(b)},\\
  \rB(d,k)&=\setsuch{b\in\rB(d)}{\dim O(b)=k} \\
\intertext{and}
  \pa(S)&=\max_{d,k}\set{\dim\rB(d,k)-k}.
\end{align*}
 Note that both $O(b)$ and $\rB(d,k)$ are locally closed subsets in $\rB(d)$. Intuitively, $\pa(S)$ is the biggest possible number of independent parameters that define isomorphism classes of $S$-ideals.

\begin{defin}\label{d1}
 Let $S$ be a one branch plane curve singularity, $\fV$ be its valuation vector. We say that $S$ is
\begin{itemize}
\item  \emph{of type} $E_{6k}$ if $\fV=(3,3k+1)$,
\item  \emph{of type} $E_{6k+2}$ if $\fV=(3,3k+2)$,
\item  \emph{of type} $W_{6k}$ if $\fV=(4,2k+1)$,
\item  \emph{of type} $W^\sharp_{k,*}$ if $\fV=(4,4k+2)$,
\item  \emph{of type} $N_{4k}$ if $\fV=(5,k+1)$.
\end{itemize}
\end{defin}

\begin{erem}
  In \cite{dg} it is shown that, if $\chr\fK=0$, our definitions of singularities of 
types $E$ and $W$ are equivalent to those given in \cite[\S\,15]{avg} in terms of the 
normal forms of equations. We do not precise the equations for singularities of type $N$, 
since they are complicated and we do not use them.
\end{erem}

They say that a singularity $S'$ \emph{dominates} the singularity $S$, or is an \emph{over-ring} of $S$, if $S\sbe S'\sbe R$.

\begin{theorem}\label{t1}
 Let $S$ be a one branch curve singularity. The following conditions are equivalent:
\begin{enumerate}
\item  $\pa(S)\le 2$.
\item Either $\chr\fK\ne2$ and $S$ dominates one of the following singularities:
\[
  E_{30},\, E_{32},\, W_{24},\, W^\sharp_{2,*},\, W_{30},\, N_{20},\, N_{24},\, N_{28},  
\]
 or $\chr\fK=2$ and $S$ dominates one of the following singularities:
\[
  E_{30},\, E_{32},\, W_{18},\, W^\sharp_{1,*},\, N_{20},\, N_{24}. 
\]
\end{enumerate}
\end{theorem}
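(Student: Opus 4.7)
The plan is to follow the sandwich strategy of \cite{dg0,dg,sch}, working throughout with the plane-model data encoded in the valuation vector $\fV$. The basic input is a monotonicity property established in the cited papers: $\pa(S') \le \pa(S)$ whenever $S \sbe S' \sbe R$. Granted this, the equivalence in Theorem \ref{t1} splits into two finite verifications. The sufficiency direction (2)$\Rightarrow$(1) reduces to checking $\pa(S) \le 2$ for each of the eight singularities (six in characteristic $2$) explicitly listed. The necessity direction (1)$\Rightarrow$(2) reduces to exhibiting, for each \emph{minimal} one-branch singularity $S_0$ that does not dominate any singularity on the list, a $3$-parameter family of pairwise non-isomorphic $S_0$-ideals.

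For the sufficiency step I would, for each valuation vector on the list, describe every ideal $M$ with $S \sbe M \sbe R$ by its discrete invariant $v(M) \sbe \mathbb{Z}_{\ge 0}$ — which lies between $v(S)$ and $\mathbb{Z}_{\ge 0}$ and so takes only finitely many values — together with the continuous data of a lift $v(M) \to M$ taken modulo $\gM\cdot M$. The group $\End(M)^\times$ acts on these continuous data, and a dimension count, identical in spirit to the unimodal analysis in \cite{dg}, bounds the number of essential parameters after this action by $2$. The special lists in characteristic $2$ arise because several quadratic normalizations of leading coefficients used to cancel a squared variable in the $W$-case become unavailable over $\fK$, and this shifts the boundary of the admissible list between $W_{18},W^\sharp_{1,*}$ and $W_{24},W^\sharp_{2,*},W_{30}$.

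For the necessity step I would locate the minimal singularities not dominating any entry of the list — these are a finite collection determined by small increments of the valuation vectors in Definition \ref{d1}, together with the smallest non-plane $\fV$ for which $\gM$ needs more than two generators — and for each of them construct an explicit $3$-parameter family of pairwise non-isomorphic over-rings $M \sp S_0$. The recipe is to choose $v(M)$ so that, after all simplifications by $\End(M)^\times$ have been applied, three independent coefficients of a normal form survive as moduli. Monotonicity then propagates $\pa \ge 3$ to every $S$ that fails to dominate a singularity on the list.

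The main obstacle is the casework for the $N$-series. Because the paper deliberately avoids writing normal equations for singularities of type $N$, the analysis has to be carried out directly from the semigroup $v(S)=\gnr{5, k{+}1, \dots}$ and from the Gorenstein property of plane singularities. Two ingredients become delicate here: the Gorenstein duality on $v(S)$ used to control which discrete types $v(M)$ support non-isomorphic ideals, and the interaction between the moduli of the ring $S$ itself and the moduli of its ideals. Pinning down the exact threshold between the listed $N$-singularities and their immediate neighbours, and simultaneously handling the characteristic-$2$ collapse that removes $W_{30}$ and $N_{28}$ from the list, will require a careful and largely combinatorial bookkeeping that I expect to constitute the bulk of the technical work.
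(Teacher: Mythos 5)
Your overall strategy --- monotonicity of $\pa$ under passing to over-rings plus a finite explicit classification of ideals by normal forms (the sandwich technique) --- is the same as the paper's, and the sufficiency direction as you describe it is sound in outline. But the necessity direction as written has two concrete problems. First, the reduction points the wrong way: since $\pa(S)\ge\pa(S')$ for $S\sbe S'$, exhibiting a $3$-parameter family of ideals over a ring $T$ only yields $\pa(S)\ge3$ for the \emph{sub}rings $S\sbe T$. The set of singularities failing condition (2) is closed under shrinking, so what you must produce is a finite collection of \emph{maximal} such rings, large enough that every failing $S$ embeds into one of them; ``minimal singularities not dominating any entry'' has no useful meaning here (that set has no minimal elements), and families constructed over small rings propagate nothing. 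Also, the witnessing family need not consist of over-rings: the paper's family $I(\al,\be,\ga)=\gnr{1,t+\al t^3+\be t^4+\ga t^8}+\gM_0$ over $S_0=\fK+\fK x+t^9R$ consists of ideals that are not rings. Second, you supply no mechanism that makes the collection of cases finite or that handles non-plane singularities, since your bookkeeping is phrased entirely through the valuation vector of a plane model. The paper's device is the multiplicity $m=\dim R/\gM R$ together with the bound $\dim I/\gM I\le m$ from \cite{di}: $m>5$ forces $\pa(S)\ge3$ outright by the argument of \cite[\S\,2.2]{dg}, $m\le 4$ is already settled in \cite{ds,sk}, and only $m=5$ needs new work; there, either $S$ contains an element of valuation $6$, $7$ or $8$ (so dominates some $N_{4k}$, $k\le7$), or $S\sbe S_0$ and the family above applies. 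Without this (or an equivalent) reduction your case list is not visibly finite.

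On the sufficiency side your plan is workable but re-derives a great deal: the paper verifies $\pa\le2$ only for the $N$-entries (Section 2, via the chain $S_0\sb S_1\sb\dots\sb S_4$ and the known list of $S_3$-ideals), citing \cite{ds} for the $E$-entries and \cite{sk} for the $W$-entries. Carrying out all eight cases from scratch via value sets and $\End(M)^\times$-orbits is legitimate, but that is where essentially all of the content lies, and the proposal defers it entirely.
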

\begin{proof}

 We suppose that $\chr\fK\ne2$. If $\chr\fK=2$, the calculations are quite similar (even easier, since less cases must be considered). As usually, we denote by $\gnr{\lst am}$ the vector space (over $\fK$) generated by $\lst am$.

 Let $m=\dim R/\gM R$. It is known that also $\dim I/\gM I\le m$ for all ideals $I$ of $S$ \cite{di}. If $m>5$, then the same observations as in \cite[\S\,2.2]{dg} show that $\pa(S)\ge3$. If $m=2$, $S$ is a Bass ring \cite{di}, so has finitely many ideals up to isomorphism. For $m=3$ the result follows from \cite[Theorem 4.1]{ds}. For $m=4$ it was proven in \cite{sk}. Hence, we only have to consider the case $m=5$. Then $S$ contains an element $x$ with $v(x)=5$, so we deal with singularities of type $N$. In section 2 we will calculate the ideals of the rings of types $N_{4k}\ (k\le 7)$ and show that there are at most $2$-parameter families in these cases. Therefore, we must show now that $\pa(1,S)\ge3$ if $m=5$ and $S$ does not contain any element $y$ with $6\le v(y)\le 8$. If it is the case, $S\sbe S_0$, where $S_0=\fK+\fK x+t^9R$. The maximal ideal of $S_0$ is $\gM_0=\fK x+t^9R$. One easily checks that the $S$-ideals
 \[
  I(\al,\be,\ga)=\gnr{1,t+\al t^3+\be t^4+\ga t^8}+\gM_0,\ \text{where }\al,\be,\ga\in\fK,
 \]
 are pairwise non-isomorphic. It implies that $\pa(1,S_0)\ge3$, hence, $\pa(1,S)\ge3$ for every $S\sbe S_0$.
\end{proof}

 We recall the \emph{sandwich procedure} used for calculation of ideals \cite{di,dg}. Let $S$ be a curve singularity, $\gM=\rad S$ and $S'=\End\gM$. We consider $S'$ as an over-ring of $S$ and set $\hS=S'/\gM$. If $I$ is an $S$-ideal, then $I'=S'I$ is an $S'$-ideal and $I'\spe I\sp\gM I=\gM I'$. So $I$ is defined by the subspace $V= I/\gM I$ of the $\hS$-module $W=I'/\gM I'$. This subspace is not arbitrary, but \emph{generating} in the sense that $\hS V=W$. Let $E=\End I'$, $E_0=\setsuch{a\in E}{aI'\sbe\gM I'}$ and $\hE=E/E_0$. Then $W$ is a $\hE$-module As it was mentioned above, we can and always will suppose that $R\spe I\spe S$, thus $R\spe I'\spe S'$. Then $E\sbe I'$, so $\hE\sbe W$, and two generating subspaces $V,V'\sbe W$ define isomorphic ideals \iff $V'=aV$ for an element $a\in\hE$. Moreover, since we only consider subspaces containing the class of $1$ (which we also denote by $1$), such element $a$ belongs to $V'$. Let $\hW=I'/\gM'I'$, where $\gM'=\rad S'$. Then the subspace $V\sbe W$ is generating \iff its image in $\hW$ is the 
whole $\hW$. Therefore, if $\dim\hW=m=\dim R/\gM R$, then $\gM'I'=\gM I'$, hence the unique generating subspace of $W$ is $W$ itself, so the unique $S$-ideal $I$ with $S'I=I'$ is $I'$. In the further calculations we will not consider such $S'$-ideals at all. The case $V=W$ will also be omitted, since then $I=I'$.

\section{Description of ideals of singularities of type $N$}

 Since the calculations are quite analogous in all cases, we consider the ``deepest'' singularity of type $N_{28}$, when the valuation vector is $(5,8)$. So, let $S\sb R=\fK[[t]]$ be generated (as a complete local $\fK$-algebra) by 
the elements $x,y$, where $v(x)=5,\,v(y)=8$. We may suppose that $t=y^2/x^3$. We also set $z=y/x$. Then $S\sp t^{28}R$. Moreover, since $S$ is Gorenstein, every $S$-ideal is either principal or an $S_0$-ideal, where 
 \[
 S_0=\End\gM=S+\gnr{t^{27}}=\gnr{1,x,y,x^2,xy,x^3,y^2,x^2y,x^4,xy^2}+t^{23}R
 \]
 (see \cite{ba} or \cite{di}). Consider the chain of rings $S_0\sb S_1\sb S_2\sb S_3\sb S_4$, where
\begin{align*}
  S_1=\End\gM_0&=\gnr{1,	x,y,x^2,xy,x^3,y^2}+t^{18}R,\\
  S_2=\End\gM_1&=\gnr{1,x,y,x^2,tx^2}+t^{13}R,\\
 S_3=\End\gM_2&=\gnr{1,z,x}+t^8R,\\
 S_4=\End\gM_3&=\gnr{1,z}+t^5R,
\end{align*}
 and $\gM_i=\rad S_i$. The $S_3$-ideals are known \cite{ja,dr,ds}; they are (except $R,S_4$ and $S_3$ itself):
\begin{align*}
  R_2&=\gnr{1}+t^2R,\\
  R_3&=\gnr{1}+t^3R,\\
  R_3^*&=\gnr{1,t}+t^3R,\\
  S_4^*&=\gnr{1,t^2,t^3}+t^5R.
\end{align*}
 The ideals $R_3^*$ and $S_4^*$ are indeed dual to $R_3$ and $S_4$ respectively, though we will not use this property. Note that it follows from \cite{di} that every $S_3$-ideal is isomorphic either to an over-ring of 
$S_3$ or to the ideal dual to such an over-ring.

 \begin{prop}\label{S2}
 Here is a complete list of representatives of the ideal classes of the ring $S_2$, which are not $S_3$-ideals, sorted by the induced $S_3$-ideals $I'=S_3I$. We set $\tI=\gM_2I'$.
\begin{itemize}
\item   $I'=S_3;\ \tI=\gM_2$:
	\begin{enumerate}
	\item $S_2$,
	\item  $F_1(\al,\be)=\gnr{1,z+\al z^3+\be z^4}+\gM_2$,
	\item  $F_2(\al)=\gnr{1,z^2+\al z^3}+\gM_2$,
	\item  $F_3(\al)=\gnr{1,z^3+\al z^4}+\gM_2$,
	\item  $I_1=\gnr{1,z^4}+\gM_2$,
	\item  $F_4(\al,\be)=\gnr{1,z+\al z^4,z^2+\be z^4}+\gM_2$,
	\item  $F_5(\al)=\gnr{1,z,z^3+\al z^4}+\gM_2$,
	\item  $F_6(\al)=\gnr{1,z+\al z^3,z^4}+\gM_2$,
	\item  $F_7(\al)=\gnr{1,z^2,z^3+\al z^4}+\gM_2$,
	\item  $F_8(\al)=\gnr{1,z^2+\al z^3,z^4}+\gM_2$,
	\item  $I_2=\gnr{1,z^3,z^4}+\gM_2$,
	\item  $I_3=\gnr{1,z.z^2,z^3}+\gM_2$,
	\item  $I_4=\gnr{1,z,z^2,z^4}+\gM_2$,
	\item  $I_5=\gnr{1,z,z^3,z^4}+\gM_2$,
	\item  $I_6=\gnr{1,z^2,z^3,z^4}+\gM_2$.
	\end{enumerate}

\item  $I'=S_4; \ \tI=\gnr{x,y}+t^{10}R$:
	\begin{enumerate}
	\item $F_9(\al,\be)=\gnr{1,z+\al tz^2+\be z^3}+\tI$, where $\al\ne0$,
	\item  $F_{10}(\al,\be)=\gnr{1,z^2+\al tz^2+\be z^3}+\tI$, where $\al\ne0$,
	\item  $F_{11}(\al)=\gnr{1,tz^2+\al z^3}+\tI$,
	\item  $F_{12}(\al,\be)=\gnr{1,z+\al tz^2,z^2+\be tz^2}+\tI$,\\ where $\al\ne0$ or $\be\ne0$,
	\item  $F_{13}(\al,\be)=\gnr{1,z+\al z^3,tz^2+\be z^3}+\tI$,
	\item  $F_{14}(\al)=\gnr{1,z+\al tz^2,z^3}+\tI$, where $\al\ne0$,
	\item  $F_{15}(\al,\be)=\gnr{1,z^2+\al z^3,tz^2+\be z^3}+\tI$,
	\item  $F_{16}(\al)=\gnr{1,z^2+\al tz^2, z^3}+\tI$, where $\al\ne0$,
	\item  $I_7=\gnr{1,tz^2, z^3}+\tI$,
	\item  $F_{17}(\al)=\gnr{1,z,z^2,tz^2+\al z^3}+\tI$,
	\item  $F_{18}(\al,\be)=\gnr{1,z+\al tz^2,z^2+\be tz^2, z^3}+\tI$,\\
	 where $\al\ne0$ or 	$\be\ne0$,
	\item  $I_8=\gnr{1,z,tz^2, z^3}+\tI$,
	\item  $I_9=\gnr{1,z^2,tz^2, z^3}+\tI$.
	\end{enumerate}
	
\item  $I'=S_4^*; \ \tI=\gnr{x,tz^2,y}+t^{10}R$:
	\begin{enumerate}
	\item $F_{19}(\al,\be)=\gnr{1,t^2+\al z+\be z^2}+\tI$,
	\item  $F_{20}(\al,\be)=\gnr{1,t^2+\al z^2, z+\be z^3}+\tI$,
	\item  $F_{21}(\al,\be)=\gnr{1,t^2+\al z,z^2+\be z^3}+\tI$,
	\item  $F_{22}(\al,\be)=\gnr{1,t^2+\al z+\be z^2,z^3}+\tI$,
	\item  $F_{23}(\al)=\gnr{1,t^2,z+\al z^3,z^2}+\tI$,
	\item  $F_{24}(\al)=\gnr{1,t^2+\al z,z^2, z^3}+\tI$, 
	\item  $F_{25}(\al)=\gnr{1,t^2+\al z^2,z, z^3}+\tI$. 
	\end{enumerate}
	
\item  $I'=R_3; \ \tI=\gnr{x}+t^8R$:
	\begin{enumerate}
	\item $F_{26}(\al,\be)=\gnr{1,z+\al tz+\be tz^2}+\tI$, where $\al\ne0$,
	\item  $F_{27}(\al,\be)=\gnr{1,tz+\al z^2+\be tz^2}+\tI$, 
	\item  $F_{28}(\al,\be)=\gnr{1,z,tz+\al z^2+\be tz^2}+\tI$,
	\item  $F_{29}(\al,\be)=\gnr{1,z+\al tz,z^2+\be tz^2}+\tI$, where $\al\ne0$,
	\item  $F_{30}(\al)=\gnr{1,z+\al tz,tz^2}+\tI$, where $\al\ne0$,
	\item  $F_{31}(\al,\be)=\gnr{1,tz+\al tz^2,z^2+\be tz^2}+\tI$,
	\item  $F_{32}(\al)=\gnr{1,tz+\al z^2,tz^2}+\tI$,
	\item  $F_{33}(\al)=\gnr{1,z,tz,z^2+\al tz^2}+\tI$, where $\al\ne0$,
	\item  $I_{10}=\gnr{1,z,tz,tz^2}+\tI$,
	\item  $F_{34}(\al)=\gnr{1,z+\al tz,z^2,t z^2}+\tI$, where $\al\ne0$,
	\item  $I_{11}=\gnr{1,z,z^2,tz^2+\al z^3}+\tI$.
	\end{enumerate}

\item  $I'=R_3^*; \ \tI=\gnr{x,z^2}+t^8R$:
	\begin{enumerate}
	\item  $F_{35}(\al,\be)=\gnr{1,t+\al z+\be tz}+\tI$, where $\al\ne0$,
	\item  $F_{36}(\al,\be)=\gnr{1,t,z+\al tz+\be tz^2}+\tI$, 
	\item  $F_{37}(\al,\be)=\gnr{1,t+\al z,tz+\be tz^2}+\tI$,
	\item  $F_{38}(\al,\be)=\gnr{1,t+\al z+\be tz, tz^2}+\tI$, 
	\item  $F_{39}(\al)=\gnr{1,t,z+\al tz^2,tz}+\tI$, 
	\item  $F_{40}(\al)=\gnr{1,t,z+\al tz, tz^2}+\tI$,
	\item  $F_{41}(\al)=\gnr{1,t+\al z,tz,tz^2}+\tI$.
	\end{enumerate}

\item  $I'=R_2; \ \tI=\gnr{x}+t^7R$:
	\begin{enumerate}
	\item  $F_{42}(\al,\be)=\gnr{1,t^2+\al z^2,z+\be tz}+\tI$, where $\be\ne0$,
	\item  $F_{43}(\al,\be)=\gnr{1,t^2+\al z, tz}+\tI$, 
	\item  $I_{12}(\al,\be)=\gnr{1,t^2,z,tz}+\tI$,
	\item  $F_{44}(\al,\be)=\gnr{1,t^2,z+\al tz, z^2}+\tI$, where $\al\ne0$,
	\item  $F_{45}(\al)=\gnr{1,t^2+\al z,tz,z^2}+\tI$. 
	\end{enumerate}

\item  $I'=R; \ \tI=t^5R$:
	\begin{enumerate}
	\item  $F_{46}(\al,\be)=\gnr{1,t+\al t^4,t^2+\be t^4}+\tI$, 
	\item  $I_{13}=\gnr{1,t,t^2, t^3}+\tI$, 
	\item  $I_{14}=\gnr{1,t,t^2,t^4}+\tI$.
	\end{enumerate}
\end{itemize}
 In all these formulae $\al$ and $\be$ denote some elements from the field $\fK$. 
Moreover, all quotient spaces $W=I'/\gM_2 I'$ are of dimension $5$, so all $S$-ideals $I$
such that $S_2I=I'$ are actually $S_2$-ideals. Therefore, we need not consider them in 
the further calculations.
\end{prop}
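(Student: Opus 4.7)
The plan is a direct application of the sandwich procedure of the preceding paragraph with $S = S_2$ and $S' = \End\gM_2 = S_3$. The $S_3$-ideals were just recalled: $R, S_4, S_3, R_2, R_3, R_3^*$ and $S_4^*$. For each such $I'$, the $S_2$-ideals $I$ with $S_3 I = I'$ are in bijection with the $\hE^\times$-orbits of generating $\hS$-submodules $V \sbe W = I'/\gM_2 I'$ containing $1$, where $\hS = S_3/\gM_2$ and $\hE = E/E_0$ is as in the sandwich paragraph.

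First I would verify that $m = \dim R/\gM_2 R = 5$ and that $\dim W = 5$ for every $I'$ on the list. Since $v(x) = 5$ is the smallest valuation in $\gM_2$, one has $\gM_2 R = t^5 R$, giving $m = 5$; the dimension of $W$ is then read off case by case from an explicit basis (for $I' = S_3$ the basis is $1, z, z^2, z^3, z^4$; for $I' = R$ it is $1, t, t^2, t^3, t^4$; and so on). This simultaneously justifies the closing remark of the statement, because when $\dim\hW = m$ the sandwich paragraph forces $\gM_2 I' = \gM I'$ for every subring $S \sbe S_2$. Next, for each of the seven $I'$ I would compute $E = \setsuch{a \in R}{aI' \sbe I'}$, the sub-ideal $E_0 = \setsuch{a \in E}{aI' \sbe \gM_2 I'}$, and $\hE = E/E_0$ as a subspace of $W$: for the over-ring cases $I' \in \{S_3, S_4, R_3, R_2, R\}$ one has $E = I'$, and for the two duals $R_3^*, S_4^*$ the ring $E$ is obtained from a short valuation comparison inside $R$.

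The enumeration then proceeds dimension by dimension on $\dim V \in \{2, 3, 4\}$; the extreme cases $\dim V = 1$ and $\dim V = 5$ give $I = S_2$ and $I = I'$ respectively and are listed or omitted accordingly. For fixed dimension I would put a basis of $V$ in echelon form against the chosen basis of $W$, then reduce by the multiplicative action of $\hE^\times$, normalizing highest-degree coefficients first and working downward. The residual scalars $\al, \be$ that no further unit can kill are the moduli of the surviving orbit, and the resulting normal forms are exactly the families $F_i$ and ideals $I_j$ in the statement; side conditions such as $\al \ne 0$ appear precisely when they are needed to prevent a family from collapsing onto a simpler one already on the list. The main obstacle is this orbit analysis, which must be carried out separately for each of the seven $I'$ and is delicate because a unit $u \in \hE^\times$ generally mixes all coefficients of a basis vector at once (for example, on $I' = S_3$ a unit $u = 1 + c_1 z + c_2 z^2 + c_3 z^3 + c_4 z^4$ acts on $z + \al z^3 + \be z^4$ in a nontrivial way), so the order of normalization matters and a final rigidity check is required to confirm that distinct $(\al, \be)$ give non-isomorphic ideals.
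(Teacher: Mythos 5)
Your proposal is correct and follows essentially the same route as the paper's own proof: the sandwich procedure with $S'=S_3=\End\gM_2$, enumeration of the generating subspaces $V\sbe W=I'/\gM_2I'$ by dimension for each of the seven $S_3$-ideals $I'$, normalization under the action of units of $\hE$, and a final rigidity check that distinct parameters give non-isomorphic ideals — exactly the scheme the paper carries out (explicitly only for $I'=S_4$, the other cases being declared analogous), including the observation that $\dim W=5=m$ disposes of the further descent for these $I'$. The only small imprecision is that the admissible range of $\dim V$ is governed by $\dim\hW$ with $\hW=I'/\gM_3I'$, which varies with $I'$ (so ``$\dim V=1$ gives $S_2$'' occurs only for $I'=S_3$), but this does not affect the substance of the argument.
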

\begin{proof}
  We only consider the case when $I'=S_4$, since all other cases are quite similar 
(mostly easier). Then $W=\gnr{1,z,z^2,tz^2,z^3}$, where we denote the class of an element by the same symbol as the element itself, and $\hW=\gnr{1,tz^2}$. Therefore, the dimension of a generating subspace $V$ is at least $2$. We also suppose that $1$ is an element of a basis of $V$.

If $\dim V=2$, there are the following possibilities:
 \begin{enumerate}
\item $V=\gnr{1,v}$, where $v=z+\ga z^2+\al tz^2+\eta z^3$ and $\al\ne0$, since $V$ must project onto $\hW$. Set $\be=\eta-\ga^2$ and $a=1-tu$, where $u=z+\al tz^2+\be z^3$. Then $aV=\gnr{1,u}$ and the preimage of $aV$ in $S_4$ is $F_9(\al,\be)$ from the list. On the other hand, the image of $F_9(\al,\be)$ in $W$ is $V(\al,\be)=\gnr{1,z+\al tz^2+\be z^3}$. If $V(\al',\be')=aV(\al,\be)$, then $a\in V(\al',\be')$, so we may suppose that $a=1-\la(z+\al'tz^2+\be'z^3)$. Then the condition $a(z+\al tz^2+\be z^3)\in V(\al',\be')$ implies that $\al'=\al$ and $\be'=\be$. Therefore, the ideals $F_9(\al,\be)$ are pairwise nonisomorphic. Further on we omit such verifications of nonisomorphy, since they are easy and straightforward.

\item  $V=\gnr{1,z^2+\al tz^2+\be z^3}$ gives rise to $F_{10}(\al,\be)$. Again one easily checks that all these ideals are nonisomorphic.

\item  $V=\gnr{1,tz^2+\al z^3}$ gives rise to $F_{11}(\al)$.
\end{enumerate}

If $\dim V=3$, there are the following possibilities:
\begin{enumerate}\setcounter{enumi}3
\item  $V=\gnr{1,u,v}$, where $u=z+\al tz^2+\ga z^3,\,v=z^2+\be tz^2+\eta z^3$ with $\al\ne0$ or $\be\ne0$. Let  $a=1-\eta u'-\ga v'$, where $u'=z+\al tz^2,\,v'=z^2+\be tz^2$; then $aV=\gnr{1,u',v'}$, so gives rise to $F_{12}(\al,\be)$.

\item  $V=\gnr{1,u,v}$, where $u=z+\ga z^2+\eta z^3,\,v=tz^2+\be z^3$. Set $\al=\eta-\ga^2$ and $u'=z+\al z^3$. Then $(1-\ga u')V=\gnr{1,u',v}$, so it gives rise to $F_{13}(\al,\be)$.

\item  In the same way $V=\gnr{1,z+\be z^2+\al tz^2,z^3}$ is reduced to $\gnr{1,z+\al tz^2,z^3}$ and gives rise to $F_{14}(\al)$.

\item  $V=\gnr{1,z^2+\al z^3,tz^2+\be z^3}$ gives rise to $F_{15}(\al,\be)$.

\item  $V=\gnr{z^2+\al tz^2,z^3}$ gives rise to $F_{16}(\al)$.

\item  $V=\gnr{z^2,tz^2,z^3}$ gives rise to $I_7$.
\end{enumerate}

Finally, if $\dim V=4$, there are the following possibilities:
\begin{enumerate}\setcounter{enumi}9
\item  $V=\gnr{1,u,v,w}$, where $u=z+\be z^3,\,v=z^2+\ga z^3,\,w=tz^2+\al z^3$. Then $(1-\ga z-\be z^2)V=\gnr{1,z,z^2,tz^2+\al z^3}$ and gives rise to $F_{17}(\al)$.

\item  $V=\gnr{1,z+\al tz^2,z^2+\be tz^2,z^3}$ gives rise to $F_{18}(\al,\be)$.

\item  $V=\gnr{1,z+\al z^2,tz^2,z^3}$. Then $(1-\al z)V=\gnr{1,z,tz^2,z^3}$ and gives rise to $I_8$.

\item  $V=\gnr{1,z^2,tz^2,z^3}$ gives rise to $I_9$.
\end{enumerate}
\end{proof}

 Now we have to find, for every $S_2$-ideal $I$ from this list, all $S_1$-ideals $I^1$ 
such that $S_2I^1=I$, then to find, for every $I^1$,  all $S_0$-ideals $I^0$ such that 
$S_1I^0=I^1$ or $S_1I^0=I$. Since the calculations are quite similar for all ideals $I$ 
and very much alike the calculations from the preceding proof (even easier), we only 
present several ``typical'' cases.
 
 \begin{case}
 (This case is the most complicated.)

  $I=S_2$, $\gM_1S_2=\gM_0S_2=\gM_1$.
 
 It gives new $S_1$-ideals:
\begin{enumerate}
 \item  
  $S_1$,
 \item  
  $I^1_1(\al,\be)=\gnr{1,u(\al,\be)}+\gM_1$,  where $u(\al,\be)
=z^2x(1+\al z+\be z^2)$,  
 \item  
  $I^1_2(\al)=\gnr{1,z^3x+\al z^4x}+\gM_1$,
 \item  
  $I^1_3=\gnr{1,z^4x}+\gM_1$,
 \item  
 $I^1_4(\al,\be)=\gnr{1,z^2x+\al z^4x,z^3x+\be z^4x}+\gM_1$,
 \item  
 $I^1_5(\al)=\gnr{1,z^2x+\al z^3x,z^4x}+\gM_1$,
 \item  
 $I^1_6=\gnr{1,z^3x,z^4x}+\gM_1$.
\end{enumerate}

 $S_1/\gM_0=\gnr{1,t^{19},t^{22}}$. It gives new $S_0$-ideals:

  $S_0,\ \gnr{1,t^{19}+\al t^{22}},\ \gnr{1,t^{22}}$. 
 
 $\gM_0I^1_1(\al,\be)=\gM_1I^1_1(\al,\be)$ if $\be\ne\al^2$. If $\be=\al^2$, then 

$I^1_1(\al,\al^2)/\gM_0I^1_1(\al,\al^2)=\gnr{1,u,t^{19}}$, which gives new $S_0$-ideals
 
 $ \gnr{1,u(\al,\al^2)+\ga t^{19},z^2xy+\al z^3xy}+\gM_0$.
\end{case}
   
\vspace*{.5ex}
\begin{case}
  $I=F_1(\al,\be)$; $\gM_1I=\gM_0I=\gnr{\al yz^2+\be yz^3,yz+\al yz^3}+\gM_1$.
\end{case}
\begin{subcs}
  $\al\ne0$. Then the only new possibilities are

 $I^1=\gnr{1,z+\al z^3+\be z^4}+\gM_1I$,

 $I^0=\gnr{1,z+\al z^3+\be z^4}+\gM_0I^1$, where

 $\gM_0I^1=\gnr{\al yz^2+\be yz^3,yz+\al yz^3+\be yz^4}+\gM_0$.
\end{subcs}
\begin{subcs}
  $\al=0,\,\be\ne0$. Then the only new possibilities are

 $I^1=\gnr{1,z+\be z^4}+\gM_1I$.

 Since $\gM_0I^1=\gM_1I^1$, no new $I^0$ occur.
\end{subcs}
\begin{subcs}
  $\al=\be=0$.  Then the only new possibilities are

 $I^1=V+\gM_1I$, where $V$ is one of the following subspaces:

 $\gnr{1,z+\ga yz^3}$, or $\gnr{1,z+\ga yz^3,yz^2+\ga' yz^3}$, or $\gnr{1,z,yz^3}$.

 In all cases $\gM_0I^1=\gM_1I^1$, so no new $I^0$ occur.
\end{subcs}

\vspace*{.5ex}
 \begin{case}
  $I=I_1$; $\gM_1I=\gM_0I=\gnr{x,y,x^2,xy}+t^{15}R$.

  The only new possibility is $I^1=\gnr{1,\al tx^2+z^4}+\gM_1 I\,$.

 Since $\gM_0 I^1=\gM_1 I^1$, so no new $I^0$ occur.
\end{case}
 
\vspace*{.5ex}
 \begin{case}
  $I=F_{11}(\al)$; $\gM_1I=\gM_0I=\gnr{x,y,x^2}+t^{13}R$.

  The only new possibility is $I^1=\gnr{1,tz^2+\al z^3+\be tx^2}+\gM_1 I$.

  Since $\gM_0 I^1=\gM_1 I^1$, so no new $I^0$ occur.
\end{case}
 
\vspace*{.5ex}
 \begin{case}
  $I=F_{20}(\al,\be)$. If $\be\ne0$, then $\gM_0I=\gM_1I=\gM_2I$, so no new $S_1$- and 
$S_0$-ideals occur. If $\be=0$, then $\gM_0I=\gM_1I=\gnr{t^2x,zy,z^4}+\gM_1$ and 
 we get new $S_1$-ideals

 $I^1(\al)=\gnr{1,t^2+\al z^2,z}+\gM_0I$.

 Again, $\gM_0I^1(\al)=\gM_1I_1(\al)$, so no new $S_0$-ideals occur.
  \end{case}

\end{document}